\newtheorem{theorem}{Theorem}[section]
\newtheorem*{theorem*}{Theorem}
\newtheorem{proposition}[theorem]{Proposition}
\newtheorem{corollary}[theorem]{Corollary}
\newtheorem{definition}[theorem]{Definition}
\newtheorem{example}[theorem]{Example}
\newtheorem{remark}[theorem]{Remark}
\newtheorem{conjecture}{Conjecture}
\newtheorem*{lemma*}{Lemma}
\newtheorem*{remark*}{Remark}
\newtheorem*{example*}{Example}
\DeclareMathAlphabet{\mathpzc}{OT1}{pzc}{m}{it}
\def\blfootnote{\xdef\@thefnmark{}\@footnotetext}
\title{The SYZ Conjecture via homological mirror symmetry}
\author{Dori Bejleri}
\newcommand{\mb}[1]{\mathbb{#1}}
\newcommand{\mc}[1]{\mathcal{#1}}
\newcommand{\Hom}{\operatorname{Hom}}
\def\ZZ{{\mathbb Z}}
\def\RR{{\mathbb R}}
\def\CC{{\mathbb C}}
\begin{document}
\maketitle

\section{Introduction} These are expanded notes based on a talk given at the Superschool on Derived Categories and $D$-branes held at the University of Alberta in July of 2016. The goal of these notes is to give a motivated introduction to the Strominger-Yau-Zaslow (SYZ) conjecture from the point of view of homological mirror symmetry. 

The SYZ conjecture was proposed in \cite{SYZ} and attempts to give a geometric explanation for the phenomena of mirror symmetry. To date, it is still the best template for constructing mirrors $\check{X}$ to a given Calabi-Yau $n$-fold $X$. We aim to give the reader an idea of why one should believe some form of this conjecture and a feeling for the ideas involved without getting into the formidable details. We assume some background on classical mirror symmetry and homological mirror symmetry as covered for example in the relevant articles in this volume. 

Should the readers appetite be sufficiently whet, she is encouraged to seek out one of the many more detailed surveys such as  \cite{ab1} \cite{dbranes} \cite{auroux} \cite{ballard} \cite{syztransform} \cite{grosstrop} \cite{grosssurv} \cite{gs1} etc. 

\section{From homological mirror symmetry to torus fibrations}

Suppose $X$ and $\check{X}$ are mirror dual K\"ahler Calabi-Yau $n$-folds. Kontsevich's homological mirror symmetry conjecture \cite{kontsevich} posits that there is an equivalence of categories 
$$
\mc{F}uk(X) \cong D^b(\mathrm{Coh}(\check{X}))\footnote{One should work with the dg/$A_\infty$ enhancements of these categories but we ignore that here.}
$$
between the Fukaya category of $X$ and the derived category of $\check{X}$.\footnote{For this text, one may take this duality as the definition of mirror symmetry. It is not obvious that mirrors exist and they are not unique.} This should make precise the physical expectation that ``the $A$-model on $X$ is equivalent to the $B$-model on $\check{X}$.''  The basic idea of the correspondence is summarized by the following table:
\begin{center}
\small
\begin{adjustwidth}{-.53cm}{0cm}
\begin{tabular}{ c|c|c } 

	& $A$-model on $X$ & $B$-model on $\check{X}$ \\
 \hline
 	objects & Lagrangians with flat $U(m)$-connection $(L,\nabla)$ & (complexes) of coherent sheaves  $\mc{F}$ \\
 \hline
 	morphisms & Floer cohomology groups $HF^*(L,M)$ & Ext groups $Ext^*(\mc{F},\mc{G})$ \\
 \hline
 	endomorphism algebra & $HF^*(L,L) = H^*(L)$ & $Ext^*(\mc{F},\mc{F})$
 	
\end{tabular}
\end{adjustwidth}
\end{center}

\.\\

We can now try to understand how this correspondence should work in simple cases. The simplest coherent sheaves on $\check{X}$ are structure sheaves of points $\mc{O}_p$ and indeed $\check{X}$ is the moduli space for such sheaves:
$$
\{\mc{O}_p \ : \ p \in \check{X}\} \cong \check{X}.
$$
Therefore there must be a family of Lagrangians with flat connections $(L_p, \nabla_p)$ parametrized by $p \in \check{X}$ and satisfying
$$
H^*(L_p) \cong Ext^*(\mc{O}_p,\mc{O}_p).
$$

We may compute the right hand side after restricting to an affine neighborhood $U$ of $p$. Since $U$ is smooth at $p$, then $p$ is the zero set of a section $\mc{O}_U \to V \cong \mc{O}_U^{\oplus n}$. Dualizing, we obtain an exact sequence 
$$
\xymatrix{V^* \ar[r]^s & \mc{O}_U \ar[r] & \mc{O}_p \ar[r] & 0}
$$
that we can extend by the \emph{Koszul resolution}
$$
\xymatrix{0 \ar[r] & \bigwedge^n V^* \ar[r]^(.47){s_n} & \bigwedge^{n-1}V^* \ar[r]^(.6){s_{n-1}} & \ldots  \ar[r]^{s_2} & V^* \ar[r]^s & \mc{O}_U \ar[r] & \mc{O}_p \ar[r] & 0}.
$$
Here 
$$
s_k(v_1 \wedge \ldots \wedge v_k) = \sum_{i = 1}^k s(v_i) v_1 \wedge \ldots \wedge \hat{v}_i \wedge \ldots \wedge v_k.
$$
Truncating and applying $\Hom(-, \mc{O}_p)$ gives us 
$$
\xymatrix{0 & \ar[l]  \bigwedge^n V_p & \ar[l]  \bigwedge^{n-1}V_p &\ar[l]  \ldots & \ar[l]  V_p & \ar[l]  k_p & \ar[l]  0}
$$
where $k_p$ is the skyscraper sheaf, $V_p$ is the fiber of $V$, and all the morphisms are $0$ since $s(w)$ vanishes at $p$ for any $w$. It follows that
$$
Ext^*(\mc{O}_p, \mc{O}_p) = \bigoplus_{k = 0}^n \bigwedge^k V_p
$$
where $V_p$ is an $n$-dimensional vector space (in fact isomorphic by the section $s$ to $T_pU$). 

Therefore we are looking for Lagrangians $L_p$ in $X$ with 
$$
H^*(L_p) \cong \bigoplus_{k = 0}^n \bigwedge^k V_p
$$
where $V_p$ is an $n$-dimensional vector space. If we stare at this for a while, we realize this is exactly the cohomology of a topological $n$-torus; $$H^*(L_p) \cong H^*(T^n).$$ This suggests that points $p \in \check{X}$ might correspond to Lagrangian tori in $X$ with flat connections. \\

We are led to consider the geometry of Lagrangian tori in the symplectic manifold $(X, \omega)$. The first thing to note is that under the isomorphism $TX \cong T^*X$ induced by the symplectic form, the normal bundle of a Lagrangian $L$ is identified with its cotangent bundle: 
$$
N_LX \cong T^*L.
$$
In fact, more is true. There is always a tubular neighborhood of $N_\epsilon(L)$ in $X$ isomorphic to a neighborhood of $L$ in $N_LX$, and under this identification we get that $N_\epsilon(L)$ is \emph{symplectomorphic} to a neighborhood of the zero section in $T^*L$ with the usual symplectic form by the Weinstein neighborhood theorem \cite[Corollary 6.2]{weinstein} 

On the other hand, if $L\cong T^n$ is an $n$-torus then $T^*L \cong \RR^n \times T^n$ is the trivial bundle. Therefore we can consider the projection
$$
\mu : \RR^n \times T^n \to \RR^n.
$$
This is a \emph{Lagrangian torus fibration} of $T^*L$ over an affine space. The restriction of $\mu$ to the tubular neighborhood $N_\epsilon(L)$ under the aforementioned identification equips a neighborhood of $L$ in $X$ with the structure of a Lagrangian torus fibration so $X$ is locally fibered by Lagrangian tori.\\

The SYZ conjecture predicts that this is true globally: given a Calabi-Yau manifold $X$ for which we expect mirror symmetry to hold, then $X$ should be equipped with a global Lagrangian torus fibration $\mu : X \to B$ which locally around smooth fibers looks like the fibration $T^*T^n \to \RR^n$ over a flat base. By the previous discussion, $\check{X}$ should be the moduli space of pairs $(L,\nabla)$ where $L$ is a Lagrangian torus fiber of $\mu$ and $\nabla$ is a flat unitary connection on the $L$. However $\mu$ can, and often will, have singular Lagrangian fibers (see Remark 2.1.\ref{rem:sing}) and understanding how these singular fibers affect $\check{X}$ is the greatest source of difficulty in tackling the SYZ conjecture. 

Let $B_0 \subset B$ be the open locus over which $\mu$ has smooth torus fibers and denote the restriction $\mu_0 : X_0 \to B_0$. Then there is an open subset $\check{X}_0 \subset \check{X}$ for which the description as a moduli space of pairs $(L, \nabla)$ of a smooth Lagrangian torus fiber of $\mu_0$ equipped with a flat unitary connection makes sense. We can ask what structure does $\check{X}_0$ gain from the existence of $\mu : X \to B$? 

Viewing $B_0$ as the space of smooth fibers of $\mu$, there is a natural map $\check{\mu}_0 : \check{X}_0 \to B_0$ given by $(L,\nabla) \mapsto L$. Now a flat unitary connection $\nabla$ is equivalent to a homomorphism
$$
\Hom(\pi_1(L), U(m)).
$$
Since $\check{X}_0$ must be $2n$ real dimensional and $\check{\mu}_0$ is a fibration over an $n$ real dimensional base, the fibers must be $n$ real dimensional and so $m = 1$. That is, the fibers of $\check{\mu}_0$ are given by
$$
\Hom(\pi_1(L), U(1)) \cong L^*
$$
the dual torus of $L$. Ignoring singular Lagrangians, $\check{X}_0 \subset \check{X}$ is equipped with a dual Lagrangian torus fibration $\check{\mu}_0 : \check{X}_0 \to B_0 \subset B$! 

\begin{conjecture} (Strominger-Yau-Zaslow \cite{SYZ}) Mirror Calabi-Yau manifolds are equipped with special Lagrangian fibrations
$$
\xymatrix{ X \ar[rd]_\mu & & \check{X} \ar[ld]^{\check{\mu}} \\ & B &}
$$
such that $\mu$ and $\check{\mu}$ are dual torus fibrations over a dense open locus $B_0 \subset B$ of the base. 

\end{conjecture}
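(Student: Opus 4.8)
Strictly speaking a conjecture admits no proof, so what follows is a \emph{strategy} rather than an argument: it records the route along which the SYZ picture is made rigorous in the cases where it is understood, and isolates where the weight of the statement sits. The plan is to manufacture $\check X$ from a fibration $\mu\colon X\to B$ in three stages. Over the smooth locus $B_0$ the construction is already forced by the discussion above: the Liouville--Arnold theorem equips $B_0$ with an integral affine structure (action--angle coordinates), presenting $\mu_0\colon X_0\to B_0$ as a quotient $T^*B_0/\Lambda$ by a lattice of closed $1$-forms (up to a $B$-field), so that the dual fibration $\check\mu_0\colon\check X_0 = TB_0/\Lambda^\vee\to B_0$ inherits a canonical \emph{semiflat} complex structure in which the fibrewise-linear functions are holomorphic. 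One then verifies that this complex manifold is Calabi--Yau: the K\"ahler form and a holomorphic volume form descend precisely when the affine structure on $B_0$ is Monge--Amp\`ere, i.e.\ when the local potential solves a real Monge--Amp\`ere equation --- this is Hitchin's account of semiflat mirror symmetry. Over $B_0$ this already reproduces the table: a fibre $(L_b,\nabla)$ maps to the point of $\check X_0$ it represents, and the Koszul computation above, read through family Floer theory, matches $H^*(L_b)$ with $\mathrm{Ext}^*(\mc{O}_{[L_b,\nabla]},\mc{O}_{[L_b,\nabla]})$.

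The hard part --- and essentially the entire content of the conjecture --- is the extension of $\check X_0$ to a \emph{compact} $\check X$ over all of $B$. Simply completing each semiflat chart and regluing by the semiflat transition maps is wrong: one obtains the wrong complex structure, and generically not even a complex manifold. The fix is to install \emph{instanton corrections}, which come from holomorphic discs in $X$ with boundary on the Lagrangian fibres \cite{auroux}; crossing a wall in $B$ across which such a disc degenerates changes the transition function by a wall-crossing automorphism $z\mapsto z(1+cz^m)^{\pm1}$, exactly the maps of Kontsevich--Soibelman and Gross--Siebert. Making this rigorous means: (i) computing the discriminant $\Delta = B\setminus B_0$ and the integral affine structure with singularities it carries on $B$; (ii) running the Gross--Siebert reconstruction \cite{gs1}, feeding $(B,\Delta,\text{affine structure})$ into the canonical scattering/tropical algorithm to produce a consistent atlas whose gluings incorporate the corrections; and (iii) showing the resulting degenerate space is smoothable, with $\check X$ a general fibre of the smoothing. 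The obstructions here are genuine: $\Delta$ has real codimension $2$ and so cannot be sidestepped; the scattering diagram can be infinitely complicated, so convergence must be proved; and \emph{special} Lagrangian --- as opposed to merely Lagrangian --- fibrations need not exist (Joyce's examples), which forces one either to weaken ``special Lagrangian'' or to pass to the large-complex-structure limit in which the Ricci-flat metric collapses onto $B$.

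Finally one checks that the object produced is a mirror in the sense of the introduction: one constructs the \emph{SYZ transform} \cite{syztransform}, a Fourier--Mukai-type functor along the (instanton-corrected) fibrewise-dual fibration sending $(L_b,\nabla)$ to $\mc{O}_{[L_b,\nabla]}$ and, more generally, Lagrangian sections to line bundles, and upgrades the pointwise isomorphism of the previous paragraphs to an $A_\infty$-equivalence $\mc{F}uk(X)\cong D^b(\mathrm{Coh}(\check X))$ via family Floer theory. I expect steps (ii)--(iii) to absorb essentially all of the labour: over $B_0$ the whole story is formal, and it is precisely the singular fibres that carry the depth of the conjecture.
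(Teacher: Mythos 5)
Since the statement is a conjecture, neither the paper nor you can prove it; your strategy sketch matches the paper's own treatment almost point for point --- the semi-flat dualization over $B_0$ via integral affine structures (Section 4), the compactification via instanton corrections and the Gross--Siebert program (Section 5), and the SYZ transform as the route to the HMS equivalence. The one small inaccuracy worth flagging is your claim that the discriminant $\Delta$ has real codimension $2$: the paper, following Joyce, warns that for genuine special Lagrangian fibrations $\Delta$ may have real codimension \emph{one}, which is precisely why $B_0$ can disconnect and why one retreats to the large complex structure limit.
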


\begin{remark} \begin{enumerate}[(i)]

\item We will discuss the notion of a \emph{special Lagrangian} and the reason for this condition in \ref{sec:slag}.

\item \label{rem:sing} Note that unless $\chi(X) = 0$, then the fibration $\mu$ must have singularities. Indeed the only compact CY manifolds with smooth Lagrangian torus fibrations are tori. 

\item From the point of view of symplectic geometry, Lagrangian torus fibrations are natural to consider. Indeed a theorem of Arnol'd and Liouville states that the smooth fibers of \emph{any} Lagrangian fibration of a symplectic manifold are tori \cite[Section 49]{arnold}. 

\end{enumerate} \end{remark}

This conjecture suggests a recipe for constructing mirror duals to a given Calabi-Yau $X$. Indeed we pick a $\mu : X \to B$ and look at the restriction $\mu_0 : X_0 \to B_0$ to the smooth locus. Then $\mu_0$ is a Lagrangian torus fibration which we may dualize to obtain $\check{\mu}_0 : \check{X}_0 \to B_0$. Then we compactify $X_0$ by adding back the boundary $X \setminus X_0$ and hope that this suggests a way to compactify the dual fibration to obtain a mirror $\check{X}$. 

It turns out the story is not so simple and understanding how to compactify $\check{X}_0$ and endow it with a complex structure leads to many difficulties arising from instanton corrections and convergence issues for Floer differentials. Furthermore this strategy to construct the dual depends not only on $X$ but also on the chosen fibration $\mu$ and indeed we can obtain different mirrors by picking different fibrations, or even from the same fibration by picking a different ``compactification'' recipe. This leads to mirrors that are \emph{Landau-Ginzburg models} and allows us to extend the statement of mirror symmetry outside of the Calabi-Yau case (\cite{kapustinli}, \cite{auroux1}, etc). Finally, there are major issues in constructing Lagrangian torus fibrations in general. Indeed it is not known if they exist for a general Calabi-Yau, and in fact they are only expected to exist in the \emph{large complex structure limit} (LCSL) \cite{grosswilson} \cite{kontsevichsoibelman}. This leads to studying SYZ mirror symmetry in the context LCSL degenerations of CY manifolds rather than for a single CY manifold as in the \emph{Gross-Siebert program} \cite{gs1} \cite{gs2}. We discuss these ideas in more detail in Section \ref{sec:construction}

\subsection{Some remarks on special Lagrangians}\label{sec:slag}

As stated, the SYZ conjecture is about \emph{special} Lagrangian (sLag) torus fibrations rather than arbitrary torus fibrations. Recall that a Calabi-Yau manifold has a nonvanishing holomorphic volume form $\Omega \in H^0(X, \Omega^n_X)$. 

\begin{definition} A Lagrangian $L \subset X$ is special if there exists a choice of $\Omega$ such that $$\mathrm{Im}(\Omega)|_L = 0.$$ \end{definition}

There are several reasons to consider special Lagrangians:

\begin{itemize} 
\item SLags minimize the volume within their homology class. In physics this corresponds to the fact that these are \emph{BPS branes} (see Section \ref{sec:tduality}). Mathematically, this corresponds to the existence of a conjectural Bridgeland-Douglas stability condition on the Fukaya category whose stable objects are the special Lagrangians (see for example \cite{joyce}). 

\item SLags give canonical representatives within a Hamiltonian isotopy class of Lagrangians. Indeed a theorem of Thomas and Yau \cite[Theorem 4.3]{thomasyau} states that under some assumptions, there exists at most one sLag within each Hamiltonian deformation class. 

\item The deformation theory of sLag tori is well understood and endows the base $B$ of a sLag fibration with the structures needed to realize mirror symmetry, at least away from the singularities. We will discuss this in more detail in Section \ref{sec:semiflat}.

\end{itemize}

However, it is much easier to construct torus fibrations than it is to construct sLag torus fibrations and in fact its an open problem whether the latter exist for a general Calabi-Yau. Therefore for many partial results and in many examples, one must get by with ignoring the special condition and considering only Lagrangian torus fibration. 

\subsection{A remark on $D$-branes and $T$-duality}\label{sec:tduality}

Strominger-Yau-Zaslow's original motivation in \cite{SYZ} differed slightly form the story above. Their argument used the physics of $D$-branes, that is, boundary conditions for open strings in the $A$- or $B$-model.\footnote{For background on $D$-branes see for example \cite{dbranes} or the other entries in this volume.}

They gave roughly the following argument for Calabi-Yau threefolds . The moduli space of $D0$\footnote{$D0$, $D3$, \ldots denote $0$-dimensional, $3$-dimensional, \ldots $D$-branes.} $B$-branes on $\check{X}$ must the moduli space of some BPS $A$-brane on $X$. The BPS condition and supersymmetry necessitate that this $D3$ brane consists of a special Lagrangian $L$ equipped with a flat $U(1)$ connection. Topological considerations force $b_1(L) = 3$ and so the space of flat $U(1)$ connections
$$
\Hom(\pi_1(L),U(1)) \cong T^3
$$
is a $3$ torus. Thus $\check{X}$ must fibered by $D3$ $A$-branes homeomorphic to tori and by running the same argument with the roles of $X$ and $\check{X}$ reversed, we must get a fibration by dual tori on $X$ as well. 

The connection with homological mirror symmetry, which was discovered later, comes from the interpretation of the Fukaya category and the derived category as the category of topological $D$-branes for the $A$- and $B$-model respectively. The morphisms in the categories correspond to massless open string states between two $D$-branes. 

Now one can consider what happens if we take a $D6$ $B$-brane given by a line bundle $\mc{L}$ on $\check{X}$. By using an argument similar to the one above, or computing 
$$
Ext^*(\mc{L},\mc{O}_p) \cong k[0],\footnote{That is, $k$ in degree  zero and $0$ in other degrees.}
$$
we see that there is a one dimensional space of string states between $\mc{L}$ and $\mc{O}_p$. Therefore the Lagrangian $S$ in $X$ dual to $\mc{L}$ must satisfy
$$
HF^*(S,L) = k[0]
$$
for any fiber $L$ of the SYZ fibration. Remembering that the Floer homology groups count intersection points of Lagrangians, this suggests that $S$ must be a section of the fibration $\mu$. 

In summary, the SYZ Conjecture states that mirror symmetry interchanges $D0$ $B$-branes on $\check{X}$ with $D3$ Lagrangian torus $A$-branes on $X$ and $D6$ $B$-branes on $\check{X}$ with $D3$ Lagrangian sections on $X$. On a smooth torus fiber of the fibration, this is interchanging $D0$ and $D3$ branes on dual $3$-tori. This duality on each torus is precisely what physicists call $T$-duality and one of the major insights of \cite{SYZ} is that in the presence of dual sLag fibrations, mirror symmetry is equivalent to fiberwise $T$-duality. 

\section{Hodge symmetries from SYZ}

The first computational evidence that led to mirror symmetry was the interchange of Hodge numbers 
\begin{equation}\label{eqn:hodge}
\begin{split}
h^{1,1}(X) = h^{1,2}(\check{X}) \\ 
h^{1,2}(X) = h^{1,1}(\check{X})
\end{split}
\end{equation}
for compact simply connected mirror Calabi-Yau threefolds $X$ and $\check{X}$. 
Thus the first test of the SYZ conjecture is if it implies the interchange of Hodge numbers. We will show this under a simplifying assumption on the SYZ fibrations. 

Let $f : X \to B$ be a proper fibration and let $i:B_0 \subset B$ be the locus over which $f$ is smooth so that $f_0 : X_0 \to B_0$ is the restriction. The $p^{th}$ higher direct image of the constant sheaf $R^pf_*\RR$ is a constructible sheaf with
$$
i^*R^pf_*\RR \cong R^p(f_0)_*\RR
$$
for each $p \geq 0$. Furthermore, $R^p(f_0)_*\RR$ is the local system on $B_0$ with fibers the cohomology groups $H^p(X_b,\RR)$ for $b \in B_0$ since $f_0$ is a submersion.

\begin{definition} We say that $f$ is simple if we can recover the constructible sheaf $R^pf_*\RR$ by the formula
$$
i_*R^p(f_0)_*\RR \cong R^pf_*\RR
$$
for all $p \geq 0$. 
\end{definition}

\begin{proposition}\label{prop:hodge} Suppose $X$ and $\check{X}$ are compact simply connected Calabi-Yau threefolds with dual sLag fibrations
$$
\xymatrix{X \ar[rd]_\mu & & \check{X} \ar[ld]^{\check{\mu}} \\ & B &}
$$
such that $\mu$ and $\check{\mu}$ are simple. Assume further that $\mu$ and $\check{\mu}$ admit sections. Then the Hodge numbers of $X$ and $\check{X}$ are interchanged as in (\ref{eqn:hodge}). 

\end{proposition}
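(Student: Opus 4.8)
The plan is to compare the rational cohomology of $X$ and $\check{X}$ through the Leray spectral sequences of $\mu$ and $\check{\mu}$, after reducing (\ref{eqn:hodge}) to a statement about total Betti numbers and Euler characteristics. Since $X$ is a simply connected Calabi--Yau threefold we have $b_0(X)=b_6(X)=1$, $b_1(X)=b_5(X)=0$, $b_2(X)=b_4(X)=h^{1,1}(X)$ (because $h^{2,0}=h^{0,2}=\dim H^2(\OO_X)=0$ by Serre duality and $\pi_1(X)=1$), and $b_3(X)=h^{3,0}+h^{2,1}+h^{1,2}+h^{0,3}=2+2h^{1,2}(X)$, and likewise for $\check{X}$. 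Hence $\sum_k b_k(X)=4+2h^{1,1}(X)+2h^{1,2}(X)$ and $\chi(X)=2h^{1,1}(X)-2h^{1,2}(X)$, so that (\ref{eqn:hodge}) is equivalent to the conjunction
\[ \sum_k b_k(X)=\sum_k b_k(\check{X}), \qquad \chi(X)=-\chi(\check{X}), \]
obtained by adding and subtracting these linear relations; it therefore suffices to prove these two identities.

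Next I would identify the constructible sheaves $\mc{R}^q:=R^q\mu_*\RR$ and $\check{\mc{R}}^q:=R^q\check{\mu}_*\RR$ on $B$. Over $B_0$ the fibre of $\mu_0$ is a $3$-torus $T$ and the fibre of $\check{\mu}_0$ is the dual torus, whose first cohomology is $H_1(T,\RR)=H^1(T,\RR)^*$; taking exterior powers gives $R^q(\check{\mu}_0)_*\RR\cong (R^q(\mu_0)_*\RR)^*$ for all $q$. Poincar\'e duality along the fibres gives $(R^q(\mu_0)_*\RR)^*\cong R^{3-q}(\mu_0)_*\RR\otimes(R^3(\mu_0)_*\RR)^*$, and the orientation local system $R^3(\mu_0)_*\RR$ is trivial since $\mathrm{Re}(\Omega)$ orients the special Lagrangian fibres (equivalently, $X$ and $B$ are oriented, so the fibres of the bundle $\mu_0$ are consistently oriented). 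Combining, then applying $i_*$ and using that \emph{both} fibrations are simple, yields $\check{\mc{R}}^q\cong \mc{R}^{3-q}$ for all $q$; in particular $\{\check{\mc{R}}^q\}_{q=0}^{3}$ is a permutation of $\{\mc{R}^q\}_{q=0}^{3}$, and for each fixed $p$ one has $\sum_q \dim H^p(B,\check{\mc{R}}^q)=\sum_q\dim H^p(B,\mc{R}^q)$.

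Granting this, the Euler-characteristic identity is immediate: the Leray spectral sequence preserves Euler characteristics, so $\chi(X)=\sum_{p,q}(-1)^{p+q}\dim H^p(B,\mc{R}^q)$, and substituting $\mc{R}^q=\check{\mc{R}}^{3-q}$ and reindexing $q\mapsto 3-q$ flips the global sign, giving $-\chi(\check{X})$. No degeneration is needed here. The Betti-number identity, however, requires the Leray spectral sequences of $\mu$ and $\check{\mu}$ to \emph{degenerate at $E_2$}: then $\sum_k b_k(X)=\sum_{p,q}\dim H^p(B,\mc{R}^q)=\sum_{p,q}\dim H^p(B,\check{\mc{R}}^q)=\sum_k b_k(\check{X})$ by the permutation just observed, and we are done.

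The main obstacle is thus this degeneration. Here one uses that $B$ is a closed oriented $3$-manifold with $H^1(B,\RR)=0$ — the Leray edge map embeds it into $H^1(X,\RR)=0$ — and hence, by Poincar\'e duality, also $H^2(B,\RR)=0$, so $H^*(B,\RR)\cong H^*(S^3,\RR)$. For dimension reasons $d_r=0$ for $r\geq 4$, and the section of $\mu$ makes $H^*(B)\to H^*(X)$ injective, forcing $E_\infty^{\bullet,0}=E_2^{\bullet,0}$ and so killing every differential into the bottom row; since the Leray spectral sequence is compatible with Poincar\'e duality on $X$ — fibrewise duality identifying the $\mc{R}^q$-row with the dual of the $\mc{R}^{3-q}$-row — the transposed differentials leaving the top row vanish as well. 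What remains is a single transpose-pair $d_2\colon H^0(B,\mc{R}^2)\to H^2(B,\mc{R}^1)$ and $d_2\colon H^1(B,\mc{R}^2)\to H^3(B,\mc{R}^1)$ between the interior rows, and its vanishing — equivalently, an explicit computation of $H^*(B,\mc{R}^1)$ and $H^*(B,\mc{R}^2)$ in terms of the monodromy of the local systems around the discriminant locus $B\setminus B_0$ — is precisely where the simplicity hypothesis enters. I expect to invoke the corresponding degeneration/topological mirror symmetry result of Gross here rather than reprove it; with the degeneration in hand, the two displayed identities, and hence (\ref{eqn:hodge}), follow.
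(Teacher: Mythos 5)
Your reduction of (\ref{eqn:hodge}) to the two identities $\sum_k b_k(X)=\sum_k b_k(\check X)$ and $\chi(X)=-\chi(\check X)$ is correct and is a genuinely different packaging from the paper, which instead computes $b_3(\check X)$ directly from the degenerate $E_2$ page; your observation that the Euler-characteristic half follows from $R^q\check\mu_*\RR\cong R^{3-q}\mu_*\RR$ alone, with no degeneration needed, is a nice simplification. The fibrewise duality, the use of simplicity to globalize it, and the section argument killing the differentials into the bottom row all match the paper's proof.

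However, there is a genuine gap at the end: you leave the vanishing of the remaining differentials $d_2\colon E_2^{0,2}\to E_2^{2,1}$ and $d_2\colon E_2^{1,2}\to E_2^{3,1}$ unproved and propose to ``invoke the corresponding result of Gross'' --- but that result \emph{is} the proposition being proved, so as written the argument is circular at its crucial step. The fix is elementary and uses only what you have already established: these two differentials vanish because their source and target, respectively, are zero. Indeed $E_2^{0,2}(\mu)=H^0(B,R^2\mu_*\RR)\cong H^0(B,R^1\check\mu_*\RR)=E_2^{0,1}(\check\mu)=E_\infty^{0,1}(\check\mu)$, which is a graded piece of $H^1(\check X,\RR)=0$ (no differential enters or leaves the $(0,1)$ spot since $E_2^{2,0}=H^2(B,\RR)=0$); similarly $E_2^{3,1}(\mu)\cong H^3(B,R^2\check\mu_*\RR)=E_2^{3,2}(\check\mu)=E_\infty^{3,2}(\check\mu)\subset \mathrm{gr}\,H^5(\check X,\RR)=0$. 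This is exactly where the simplicity hypothesis and the duality isomorphism do their work, and no monodromy computation around the discriminant is required. A secondary soft spot: your appeal to ``compatibility of the Leray spectral sequence with Poincar\'e duality on $X$'' to kill differentials out of the top row is not a formality for a proper non-submersive map with merely constructible $R^q\mu_*\RR$; it is cleaner to argue as the paper does, namely that $E_2^{1,3}=H^1(B,\RR)=0$, that $d_3\colon E_3^{0,3}\to E_3^{3,1}$ has zero target by the above, and that $d_2\colon E_2^{0,3}\to E_2^{2,2}$ vanishes because the class of the section $S$ in $H^3(X,\RR)$ restricts to a generator of the one-dimensional $H^0(B,R^3\mu_*\RR)$, which must therefore survive to $E_\infty$.
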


Before the proof, we will review some facts about tori. If $T$ is an $n$-torus, there is a canonical identification
$$
T \cong H_1(T,\RR)/\Lambda_T
$$
where $\Lambda_T$ denotes the lattice $H_1(T,\ZZ)/tors \subset H_1(T,\RR)$. Then the isomorphism $$H^1(T,\RR) \cong H_1(T,\RR)^*$$ induces an identification
$$
T^* \cong H^1(T,\RR)/\Lambda_T^*
$$
where $\Lambda_T^* = H^1(T,\ZZ)/tors \subset H^1(T,\RR)$. It follows that $H_1(T^*, \RR) = H^1(T,\RR)$ and $\Lambda_{T^*} = \Lambda_T^*$. More generally, denoting $V = H_1(T,\RR)$, there are isomorphisms 
\begin{align*}
H^p(T,\RR) &\cong \bigwedge^p V^* \\
H^p(T^*,\RR) &\cong \bigwedge^p V.
\end{align*}
After fixing an identification $\bigwedge^n V \cong \RR$, Poincar\'e duality gives rise to isomorphisms
$$
H^p(T,\RR) \cong H^{n-p}(T^*,\RR)
$$
compatible with the identification $\Lambda_T^* = \Lambda_{T^*}$. 

\begin{proof}[Proof of Proposition \ref{prop:hodge}] Applying the above discussion fiber by fiber to the smooth torus bundle $\mu_0 : X_0 \to B_0$, we obtain that an isomorphism of torus bundles
$$
R^1(\mu_0)_* (\RR/\ZZ) := (R^1(\mu_0)_*\RR)/(R^1(\mu_0)_* \ZZ/tors) \cong \check{X}_0
$$
over $B$. Similarly $X_0 \cong R^1(\check{\mu}_0)_*(\RR/\ZZ)$ and Poincar\'e duality gives rise to
$$
R^p(\mu_0)_*\RR \cong R^{3 - p}(\check{\mu}_0)_*\RR.
$$
By the simple assumption on $\mu$ and $\check{\mu}$ it follows that
\begin{equation}\label{eqn:dual}
R^p \mu_*\RR \cong R^{3 - p}\check{\mu}_*\RR.
\end{equation}
We want to use this isomorphism combined with the Leray spectral sequence to conclude the relation on Hodge numbers. 

First note that $H^1(X,\RR) = 0$ by the simply connected assumption and so $H^5(X,\RR) = 0$ by Poincar\'e duality. This implies the Hodge numbers $h^{0,1}(X), h^{1,0}(X), h^{2,3}(X)$ and $h^{3,2}(X)$ are all zero. By Serre duality, $h^{2,0} = h^{0,2}(X) = h^{0,1}(X) = 0$. Furthermore, $h^{1,3} = h^{3,1} = h^1(X,\Omega_X^3) = h^{0,1} = 0$ by the Calabi-Yau condition. Finally, $h^{3,3} = h^{0,0} = 1$ is evident and $h^{0,3} = h^{3,0} = h^0(X, \Omega_X^3) = 1$ again by the Calabi-Yau condition. Putting this together gives us the following relation between Hodge numbers and Betti numbers:
\begin{align*}
h^{1,1}(X) = b_2(X) = b_4(X) &= h^{2,2}(X) \\ 
b_3(X) = 2 + h^{1,2}(X) + h^{2,1}(X) &= 2(1 + h^{1,2}(X))
\end{align*}
Of course the same is also true for $\check{X}$. Thus it would suffice to show 
\begin{equation}\label{b3}
b_3(\check{X}) = 2 + h^{1,1}(X) + h^{2,2}(X) = 2(1 + h^{1,1}(X))
\end{equation}
from which it follows that $h^{1,1}(X) = h^{1,2}(\check{X})$ as well as $h^{1,1}(\check{X}) = h^{1,2}(X)$ by applying the same argument to $X$. 

The sheaves $R^3\mu_*\RR$ and $R^0\mu_*\RR$ are both isomorphic to the constant sheaf $\RR$. As $X$ is simply connected, so is $B$ so we deduce $H^1(B, \RR) = 0$ and similarly $H^2(B,\RR) = 0$ by Poincar\'e duality. Thus $H^1(B, R^0\mu_*\RR) = H^2(B, R^0\mu_*\RR) = H^1(B, R^3\mu_*\RR) = H^2(B, R^3\mu_*\RR) = 0$ and $H^i(B, R^j\mu_*\RR) = \RR$ for $i,j = 0,3$. Next the vanishing $H^1(X, \RR) = H^5(X,\RR)$ imply that $H^0(B, R^1\mu_*\RR) = H^3(B,R^2\mu_*\RR) = 0$. Applying the same reasoning to $\check{\mu}$ and using the isomorphism (\ref{eqn:dual}), we get
\begin{align*}
H^0(B,R^2\mu_*\RR) &= H^0(B, R^1\check{\mu}_*\RR) = 0\\
H^3(B, R^1\mu_*\RR) &= H^3(B, R^2\check{\mu}_*\RR) = 0. 
\end{align*}

Putting this all together, the $E_2$ page of the Leray spectral sequence for $\mu$ becomes
$$
\xymatrix{\RR \ar[drr]^{d_1} & 0 & 0 & \RR \\ 0 & H^1(B, \RR^2\mu_*\RR) & H^2(B, \RR^2\mu_*\RR) & 0 \\ 0 & H^1(B, \RR^1\mu_*\RR) \ar[drr]^{d_2}& H^2(B, \RR^1\mu_*\RR) & 0 \\ \RR & 0 & 0 & \RR}
$$
with the only possibly nonzero differentials depicted above. We claim in fact that $d_1$ and $d_2$ must also be zero. 

Indeed let $S \subset X$ be a section of $\mu$. Then $S$ induces a nonzero section $s \in \RR \cong H^0(B,R^3\mu_*\RR)$ since it intersects each fiber in codimension $3$. Furthermore $S$ must represent a nonzero cohomology class on $X$ and so $s \in \ker(d_1)$. This forces $d_1$ to be the zero map since $H^0(B,R^3\mu_*\RR)$ is one dimensional. Similarly, the fibers of $\mu$ give rise to a nonzero class in $f \in H^3(B, R^0\mu_*\RR) \cong \RR$. Since the class of a fiber is also nonzero in the cohomology of $X$ as the fibers intersect the section nontrivially, then $f$ must remain nonzero in $\mathrm{coker}(d_2)$; that is, $d_2$ must be zero. 

This means the Leray spectral sequence for $\mu$ degenerates at the $E_2$ page and similarly for $\check{\mu}$. In particular, we can compute 
\begin{align*}
h^{1,1}(X) = b_2(X) &= h^1(B, R^1\mu_*\RR) = h^1(B,R^2\check{\mu}_*\RR) \\
h^{2,2}(X) = b_4(X) &= h^2(B,R^2\mu_*\RR) = h^2(B, R^1\check{\mu}_*\RR)
\end{align*}
where we have again used (\ref{eqn:dual}). Therefore we can verify
$$
b_3(\check{X}) = 2 + h^1(B, R^2\check{\mu}_*\RR) + h^2(B, R^1\check{\mu}_*\RR) = 2 + h^{1,1}(X) + h^{2,2}(X)
$$
as required. 

\end{proof}

\begin{remark} The argument above (originally appearing in \cite{grossslag1}) was generalized by Gross in \cite{grossslag2} to obtain a relation between the integral cohomologies of $X$ and $\check{X}$.  \end{remark}

The reader may object that there are several assumptions required in the above result. The existence of a section isn't a serious assumption. Indeed all that was required in the proof is the existence of a cohomology class that behaves like a section with respect to cup products. As we already saw in \ref{sec:tduality}, mirror symmetry necessitate the existence of such Lagrangians on $X$ dual to line bundles on $\check{X}$ and vice versa. The simplicity assumption, on the other hand, is serious and isn't always satisfied. However, this still gives us a good heuristic check of SYZ mirror symmetry. 

\section{Semi-flat mirror symmetry}\label{semiflat}

In this section we will consider the case where $\mu$ and $\check{\mu}$ are smooth sLag fibrations so that $B_0 = B$. This is often called the \emph{semi-flat} case.

In this case we will see that the existence of dual sLag fibrations endows $B$ with the extra structure of an integral affine manifold which results in a toy model of mirror symmetry on $B$. In fact, we will see that the dual SYZ fibrations can be recovered from this integral affine structures. Finally, we will discuss an approach to realize HMS conjecture in the semi-flat case. 

\subsection{The moduli space of special Lagrangians}\label{sec:semiflat}

The starting point is the following theorem of McLean:

\begin{theorem}\label{thm:mclean}(McLean \cite[Section 3]{mclean}) Let $(X, J, \omega, \Omega)$ be a K\"ahler Calabi-Yau $n$-fold. Then the moduli space $\mathcal{M}$ of special Lagrangian submanifolds is a smooth manifold. Furthermore, there are natural identifications
$$
H^{n-1}(L,\RR) \cong T_L\mathcal{M} \cong H^1(L,\RR) 
$$
of the tangent space to any sLag submanifold $L \subset X$. 
\end{theorem}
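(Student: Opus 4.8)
The plan is to set up the deformation problem for a special Lagrangian $L \subset X$ as a nonlinear PDE on sections of the normal bundle and then linearize it. First I would use the Weinstein neighborhood theorem, exactly as in Section 2, to identify a tubular neighborhood of $L$ in $X$ with a neighborhood of the zero section in $T^*L$, carrying $\omega$ to the standard symplectic form. Under this identification, nearby Lagrangian submanifolds that are $C^1$-close to $L$ are precisely the graphs of closed $1$-forms $\alpha \in \Omega^1(L)$, so the Lagrangian condition linearizes to $d\alpha = 0$. The remaining constraint is the special condition $\mathrm{Im}(\Omega)|_{L_\alpha} = 0$; I would pull back $\mathrm{Im}\,\Omega$ along the graph map and show that, to first order in $\alpha$, it equals $d(*\beta)$ (up to a constant) for a suitable $1$-form $\beta$ built from $\alpha$ via the metric — more precisely, that the function $L \mapsto \mathrm{Im}(\Omega)|_{L_\alpha}$ has linearization $\alpha \mapsto -d * \alpha$ (using that $L$ is already special, so $\mathrm{Re}\,\Omega$ restricts to the volume form of the induced metric and $\mathrm{Im}\,\Omega|_L = 0$). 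Hence the linearized deformation space is $\{\alpha \in \Omega^1(L) : d\alpha = 0,\ d*\alpha = 0\}$, the space of harmonic $1$-forms, which by Hodge theory is $H^1(L,\RR)$.

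Next I would promote this infinitesimal computation to an honest smooth moduli space. The standard route is to consider the map $F : \alpha \mapsto \bigl(d\alpha,\ \mathrm{Im}(\Omega)|_{L_\alpha}\bigr)$ between appropriate Banach spaces of sections (Hölder $C^{k,\gamma}$ or Sobolev completions of closed forms, landing in exact forms), observe that its linearization at $0$ is the surjective operator $\alpha \mapsto (d\alpha, -d*\alpha)$ with kernel the harmonic forms, and apply the implicit function theorem to conclude that the solution set is, near $L$, a smooth manifold of dimension $b_1(L) = \dim H^1(L,\RR)$. Elliptic regularity then upgrades the Banach-space solutions to genuine smooth special Lagrangians, so $\mathcal{M}$ is a smooth finite-dimensional manifold with $T_L\mathcal{M} \cong H^1(L,\RR)$ canonically.

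Finally, for the identification $T_L\mathcal{M} \cong H^{n-1}(L,\RR)$, I would observe that a normal vector field $v$ along $L$ corresponds under the symplectic form to the $1$-form $\iota_v\omega|_L$ and, simultaneously, under the holomorphic volume form to the $(n-1)$-form $\iota_v(\mathrm{Im}\,\Omega)|_L$; a deformation through special Lagrangians is exactly one for which the former is closed (Lagrangian) and the latter is closed (special), and one checks that on a sLag these two forms are Hodge-dual up to a constant, so both conditions hold simultaneously and the two cohomology classes determine each other. This gives the natural isomorphisms $H^{n-1}(L,\RR) \cong T_L\mathcal{M} \cong H^1(L,\RR)$. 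The main obstacle is the analytic heart of the argument: correctly computing the linearization of the nonlinear operator $\alpha \mapsto \mathrm{Im}(\Omega)|_{L_\alpha}$ and verifying the hypotheses of the implicit function theorem in the right functional-analytic setting (choice of Banach completions, surjectivity onto exact forms, elliptic regularity) — the topological and Hodge-theoretic bookkeeping is then straightforward, but making the deformation space genuinely unobstructed is what requires care, and it is special to the Calabi-Yau/special-Lagrangian situation.
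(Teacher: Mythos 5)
Your proposal is correct and follows essentially the same route as the paper, which only sketches the idea (normal field $v \mapsto \alpha = -i_v\omega$ and $\beta = i_v\mathrm{Im}\,\Omega$, with $\alpha$ and $\beta$ mutually determining Hodge duals and the sLag condition linearizing to closedness of both, hence harmonicity) and defers the analysis to McLean. You have simply filled in the standard analytic backbone of McLean's argument — the Weinstein identification, the computation of the linearization as $-d{*}\alpha$, and the implicit function theorem with elliptic regularity — which is exactly what the cited proof does.
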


The idea is that a deformation of $L$ is given by a normal vector field $v \in C^\infty(N_LX,\RR)$. Then we obtain a $1$-form $\alpha \in \Omega^1(L,\RR)$ and an $n-1$-form $\beta \in \Omega^{n-1}(L,\RR)$ by contraction with $\omega$ and $\mathrm{Im}\Omega$ respectively:
\begin{align*}
\alpha &= -i_v \omega \\
\beta &= i_v \mathrm{Im}\Omega. 
\end{align*}
It turns out that $\alpha$ and $\beta$ determine each other and that $v$ induces a sLag deformation of $L$ if and only if $\alpha$ and $\beta$ are both closed. This gives the above isomorphisms by the maps $v \mapsto [\alpha] \in H^1(L,\RR)$ and $v \mapsto [\beta] \in H^{n-1}(L,\RR)$ respectively. \\

Note in particular that the isomorphism $T_L\mathcal{M} \cong H^1(L,\RR)$ depends on the symplectic structure $\omega$ and the isomorphism $T_L\mathcal{M} \cong H^{n-1}(L,\RR)$ depends on the complex structure through the holomorphic volume form $\Omega$. 

\begin{definition}\label{defn:intaffine} An integral affine manifold $M$ is a smooth manifold equipped with transition functions in the affine group $\RR^n \rtimes \mathrm{GL}_n(\ZZ)$. Equivalently it is a manifold $M$ equipped with a local system of integral lattices $\Lambda \subset TM$. 
\end{definition}

The equivalence in definition \ref{defn:intaffine} can be seen by noting that if the transition functions of $M$ are affine transformations, they preserve the integral lattice defined in local coordinates by
\begin{equation}\label{eqn:coord}
\Lambda := \mathrm{Span}_\ZZ\left( \frac{\partial}{\partial y_1}, \ldots, \frac{\partial}{\partial y_n} \right) \subset TU. 
\end{equation}
On the other hand, if there exists a local system of integral lattice $\Lambda \subset TM$ with a compatible flat connection $\nabla$ on $TM$, then on a small enough coordinate patch we can choose coordinates such that $\Lambda$ is the coordinate lattice and the transition functions must be linear isomorphisms on this lattice.  \\

The vector spaces $H^1(L,\RR)$ and $H^{n-1}(L,\RR)$ glue together to form vector bundles on $\mathcal{M}$. Explicitly, if $\mathcal{L} \subset X \times \mathcal{M}$ is the universal family of sLags over $\mathcal{M}$ with projection $\pi : \mathcal{L} \to \mathcal{M}$ then these bundles are $R^1\pi_*\RR$ and $R^{n-1}\pi_*\RR$ respectively. Similarly, the integral cohomology groups $H^1(L,\ZZ)/tors \subset H^1(L,\RR)$ and $H^{n-1}(L,\ZZ)/tors \subset H^{n-1}(L,\RR)$ glue together into local systems of integral lattices $R^1\pi_*\ZZ/tors \subset R^1\pi_*\RR$ and $R^{n-1}\pi_*\ZZ/tors \subset R^{n-1}\pi_*\RR$. Applying Theorem \ref{thm:mclean} fiber by fiber yields two integral affine structures on $\mathcal{M}$:

\begin{corollary}\label{cor:intlattice} There are isomorphisms $R^1\pi_*\RR \cong T\mathcal{M} \cong R^{n-1}\pi_*\RR$ which endow $\mathcal{M}$ with two integral affine structures given by the integral lattices 
\begin{align*}
R^1\pi_*\ZZ/tors &\subset R^1\pi_*\RR \cong T\mathcal{M} \\
R^{n-1}\pi_*\ZZ/tors &\subset R^{n-1}\pi_*\RR \cong T\mathcal{M}.
\end{align*}
Poincare duality induces an isomorphism $T\mathcal{M} \cong T^*\mathcal{M}$ exchanging the lattices and their duals. \end{corollary}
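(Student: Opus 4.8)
The plan is to globalize McLean's pointwise identifications to isomorphisms of vector bundles, then to exhibit the two claimed integral affine structures concretely via period coordinates, and finally to transport fiberwise Poincar\'e duality to $\mathcal{M}$.

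First I would upgrade Theorem \ref{thm:mclean} to a statement about the universal family $\pi:\mathcal{L}\to\mathcal{M}$. The key observation is that the isomorphisms $T_L\mathcal{M}\cong H^1(L,\RR)$ and $T_L\mathcal{M}\cong H^{n-1}(L,\RR)$ are canonical: they send the normal field $v$ attached to a tangent vector of $\mathcal{M}$ to $[-\iota_v\omega]$ and to $[\iota_v\,\mathrm{Im}\,\Omega]$ respectively, so they depend only on $(\omega,\Omega)$ and on no auxiliary choice, and the harmonic representatives of $\alpha$ and $\beta$ vary smoothly with $L$ by elliptic theory in families. Applying Theorem \ref{thm:mclean} fiber by fiber therefore produces vector bundle isomorphisms $R^1\pi_*\RR\cong T\mathcal{M}\cong R^{n-1}\pi_*\RR$, under which the locally constant subsheaves $R^1\pi_*\ZZ/tors$ and $R^{n-1}\pi_*\ZZ/tors$ become local systems of lattices in $T\mathcal{M}$.

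Next I would verify that each of these lattices genuinely underlies an integral affine structure in the sense of the first description in Definition \ref{defn:intaffine}, by producing around any $L_0\in\mathcal{M}$ charts whose transition functions lie in $\RR^n\rtimes\mathrm{GL}_n(\ZZ)$ and in which the lattice is the coordinate lattice. On a simply connected neighborhood $U\ni L_0$ I would trivialize $R_1\pi_*\ZZ$ by a basis $\Gamma_1,\dots,\Gamma_n$ of $H_1(L,\ZZ)/tors$; as $L$ moves along a path in $U$ each $\Gamma_i$ traces out a cylinder $C_i$ in $X$, and since $\omega$ is closed and vanishes on every Lagrangian fiber, Stokes' theorem shows that $\phi_i(L):=\int_{C_i}\omega$ is well defined, independent of the path, of $U$, and of the chosen cycles in their homology classes. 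Differentiating, $d\phi_i|_L$ is the pairing of $T_L\mathcal{M}$ against $[\Gamma_i]$ through $v\mapsto[-\iota_v\omega]$, so the $\phi_i$ form coordinates on $U$ and the lattice $R^1\pi_*\ZZ/tors$ is exactly $\mathrm{Span}_\ZZ(\partial_{\phi_1},\dots,\partial_{\phi_n})$; replacing $\Gamma_i$ by another integral basis via $M\in\mathrm{GL}_n(\ZZ)$ changes $\phi=(\phi_i)$ to $M^{\mathsf T}\phi+\mathrm{const}$, an element of $\RR^n\rtimes\mathrm{GL}_n(\ZZ)$. Running the identical argument with $\mathrm{Im}\,\Omega$ in place of $\omega$ (using that $\mathrm{Im}\,\Omega|_L=0$ on sLag fibers) and a basis of $H_{n-1}(L,\ZZ)/tors$ gives the second integral affine structure, with lattice $R^{n-1}\pi_*\ZZ/tors$. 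I expect this to be the substantive step: what makes the period functions into honest affine coordinates (rather than merely producing a flat frame) is precisely McLean's closedness of $\alpha$ and $\beta$ together with the vanishing of $\omega$ and $\mathrm{Im}\,\Omega$ on the fibers, whereas the smooth dependence on $L$ needed in the first step I would simply quote from elliptic theory.

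Finally, for the duality statement I would use that each sLag $L$ is a closed oriented $n$-manifold, oriented by the calibrating form $\mathrm{Re}\,\Omega|_L$, so that cup product paired with the fundamental class is a perfect pairing $H^1(L,\RR)\times H^{n-1}(L,\RR)\to\RR$ restricting to a perfect $\ZZ$-valued pairing of $H^1(L,\ZZ)/tors$ with $H^{n-1}(L,\ZZ)/tors$ by Poincar\'e duality. Since these orientations are consistent in the family, the fundamental classes trivialize $R^n\pi_*\RR$ and the pairings globalize to an isomorphism $R^1\pi_*\RR\cong(R^{n-1}\pi_*\RR)^*$ carrying $R^1\pi_*\ZZ/tors$ onto $(R^{n-1}\pi_*\ZZ/tors)^*$. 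Composing with the two isomorphisms from the first step yields $T\mathcal{M}\cong R^1\pi_*\RR\cong(R^{n-1}\pi_*\RR)^*\cong T^*\mathcal{M}$, and by construction this interchanges the two integral lattices with one another's duals, completing the plan.
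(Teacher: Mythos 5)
Your argument is correct, and it supplies substantially more than the paper does: the paper offers no proof beyond the sentence preceding the statement, namely that the cohomologies of the fibers glue into local systems $R^1\pi_*\ZZ/tors\subset R^1\pi_*\RR$ and $R^{n-1}\pi_*\ZZ/tors\subset R^{n-1}\pi_*\RR$ and that ``applying Theorem \ref{thm:mclean} fiber by fiber'' yields the two integral affine structures. In effect the paper invokes only the second characterization in Definition \ref{defn:intaffine} (a local system of lattices in $T\mathcal{M}$) and leaves implicit the genuinely substantive point, which your middle step addresses head-on: a lattice subsheaf of $T\mathcal{M}$ gives an integral affine structure only if it is locally a \emph{coordinate} lattice, equivalently the flat connection it induces is torsion-free. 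Your period functions $\phi_i(L)=\int_{C_i}\omega$ (and their $\mathrm{Im}\,\Omega$ analogues), well defined by Stokes because $\omega$ and $\mathrm{Im}\,\Omega$ are closed and vanish on the (special) Lagrangian fibers, are exactly the action/flux coordinates that certify this, and they match what the paper only gestures at later (Section \ref{sec:instanton}, ``symplectic areas of cylinders swept out by isotopy of nearby fibers''). Your duality step via the unimodular cup-product pairing $H^1(L,\ZZ)/tors\times H^{n-1}(L,\ZZ)/tors\to\ZZ$, with orientation from the calibration $\mathrm{Re}\,\Omega$, is also correct and is the intended content of the last sentence of the corollary. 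In short: same skeleton as the paper, but you prove the step the paper asserts, and your write-up is the stronger of the two.
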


\subsection{Mirror symmetry for integral affine structures}

\subsubsection{From SYZ fibrations to integral affine structures}\label{sec:intaffine} Now let us return to the case of dual SYZ fibrations
$$
\xymatrix{X \ar[rd]_\mu & & \check{X} \ar[ld]^{\check{\mu}} \\ & B & }
$$
where both $\mu$ and $\check{\mu}$ are smooth. Then $\dim B = n = \dim H^1(L,\RR)$ is the dimension of the moduli space of sLag $n$-tori in $X$ and so $B$ must be an open subset of the moduli space $\mathcal{M}$. 

In particular, by Corollary \ref{cor:intlattice}, the symplectic form $\omega$ and the holomorphic volume form $\Omega$ on $X$ induces two integral affine structures on $B$ explicitly given by
\begin{align*}
\Lambda_\omega := R^1\mu_*\ZZ/tors &\subset R^1\mu_*\RR \cong TB \\
\Lambda_\Omega := R^{n-1}\mu_*\ZZ/tors &\subset R^{n-1}\mu_*\RR \cong TB
\end{align*}
We call these the K\"ahler and complex integral affine structures respectively. Similarly the symplectic and holomorphic forms $\check{\omega}$ and $\check{\Omega}$ on $\check{X}$ induce two other integral affine structures
\begin{align*}
\Lambda_{\check{\omega}} := R^1\check{\mu}_*\ZZ/tors &\subset R^1\check{\mu}_*\RR \cong TB \\
\Lambda_{\check{\Omega}} := R^{n-1}\check{\mu}_*\ZZ/tors &\subset R^{n-1}\check{\mu}_*\RR \cong TB
\end{align*}
on $B$. The fact that these torus fibrations are dual implies natural isomorphisms 
\begin{align*}
R^1\mu_*\RR &\cong R^{n-1}\check{\mu}_*\RR \\
R^{n-1}\mu_*\RR &\cong R^1\check{\mu}_*\RR 
\end{align*}
The top isomorphism exchanges $\Lambda_\omega$ and $\Lambda_{\check{\Omega}}$ while the bottom isomorphism exchanges $\Lambda_{\check{\omega}}$ and $\Lambda_\Omega$. We can summarize this as follows: \emph{SYZ mirror symmetry for smooth sLag torus fibrations interchanges the complex and K\"ahler integral affine structures on the base $B$.} \\

\subsubsection{From integral affine structures to SYZ fibrations} We can go in the other direction and recover the mirror SYZ fibrations $\mu$ and $\check{\mu}$ from the integral affine structures on the base $B$. The key is the following proposition:

\begin{proposition}\label{prop:semiflat} Let $(B,\Lambda \subset TB)$ be an integral affine manifold. Then the torus fibration $TB/\Lambda \to B$ has a natural complex structure and the dual torus fibration $T^*B/\Lambda^* \to B$ has a natural symplectic structure. \end{proposition}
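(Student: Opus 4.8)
The plan is to construct the complex structure on $TB/\Lambda$ and the symplectic structure on $T^*B/\Lambda^*$ directly in integral affine coordinates and then check that the constructions are coordinate-independent. Let $y_1,\dots,y_n$ be local integral affine coordinates on an open set $U\subset B$, so that transition functions lie in $\RR^n\rtimes\mathrm{GL}_n(\ZZ)$ and $\Lambda$ is spanned over $\ZZ$ by $\partial/\partial y_1,\dots,\partial/\partial y_n$ as in (\ref{eqn:coord}). Write the induced fiber coordinates on $TU$ as $x_1,\dots,x_n$, i.e.\ a tangent vector is $\sum x_j\,\partial/\partial y_j$; similarly on $T^*U$ use dual fiber coordinates $\xi_1,\dots,\xi_n$ with covector $\sum \xi_j\,dy_j$, and $\Lambda^*$ is spanned by $dy_1,\dots,dy_n$.

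For the complex structure on $TB/\Lambda$: on $TU$ I would set $z_j = x_j + \sqrt{-1}\,y_j$ and declare these to be holomorphic coordinates. Quotienting by $\Lambda$ amounts to the identification $x_j\sim x_j+1$, i.e.\ $z_j\sim z_j+1$, which is holomorphic, so the complex structure descends to $TU/\Lambda\cong (\CC/\ZZ)^n$. The point to verify is that this complex structure is independent of the chosen affine chart: under a transition $y\mapsto Ay+b$ with $A\in\mathrm{GL}_n(\ZZ)$, $b\in\RR^n$, the fiber coordinates transform as $x\mapsto Ax$ (since $A$ is constant, the Jacobian is $A$), so $z=x+\sqrt{-1}\,y \mapsto Az + \sqrt{-1}(Ay+b) = A z + \sqrt{-1}\,b$, which is a complex-affine (in fact holomorphic) change of coordinates. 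Hence the transition maps for the $z$-coordinates are biholomorphic and the complex manifold structure on $TB/\Lambda$ is well defined globally.

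For the symplectic structure on $T^*B/\Lambda^*$: this is the more robust half, since $T^*B$ already carries the canonical Liouville one-form $\lambda = \sum \xi_j\,dy_j$ and the canonical symplectic form $\omega_{\mathrm{can}} = d\lambda = \sum dy_j\wedge d\xi_j$. I would check that $\omega_{\mathrm{can}}$ is invariant under translation by $\Lambda^*$, i.e.\ under $\xi_j\mapsto \xi_j + (\text{integer combination of columns of the relevant matrix})$: translations in the fiber direction by closed one-forms (here, by the flat sections $dy_j$ spanning $\Lambda^*$) preserve $d\lambda$ because $\omega_{\mathrm{can}}$ has constant coefficients in these coordinates. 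Therefore $\omega_{\mathrm{can}}$ descends to the quotient $T^*B/\Lambda^*$. Coordinate-independence is automatic since $\omega_{\mathrm{can}}$ is intrinsically defined on any cotangent bundle, and the lattice $\Lambda^*\subset T^*B$ is globally defined as the dual of $\Lambda$, so the quotient and its symplectic form are canonical.

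The main obstacle, such as it is, is bookkeeping: one must be careful about how the fiber coordinates of $TB$ and $T^*B$ transform under integral affine changes of base coordinates (contravariantly by $A$ versus by $(A^{-1})^T$), and confirm that $\Lambda$ and $\Lambda^*$ really are preserved — this is exactly where the integrality condition $A\in\mathrm{GL}_n(\ZZ)$ rather than $\mathrm{GL}_n(\RR)$ is used, and it is the only place. A secondary point worth a sentence is that $TB/\Lambda\to B$ and $T^*B/\Lambda^*\to B$ are genuinely dual torus fibrations in the sense used earlier in the paper, since the fiber of one over $b\in B$ is $T_bB/\Lambda_b$ and of the other is $(T_bB/\Lambda_b)^* = T^*_bB/\Lambda_b^*$; I would note this so the proposition connects back to the SYZ picture. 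No deep input is needed beyond Corollary~\ref{cor:intlattice} and Definition~\ref{defn:intaffine}; the content is that the naive local constructions glue.
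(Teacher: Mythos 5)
Your proposal is correct and follows essentially the same route as the paper: local integral affine coordinates $y_j$, fiber coordinates $x_j$, the complex coordinates $z_j = x_j + \sqrt{-1}\,y_j$ with holomorphicity of the $\mathrm{GL}_n(\ZZ)$-affine transitions, and descent of the canonical symplectic form on $T^*B$ under translation by $\Lambda^*$. The only cosmetic difference is that the paper exhibits coordinates on the quotient via $q_j = e^{2\pi\sqrt{-1}z_j}$ rather than checking directly that $z_j \sim z_j + 1$ is a holomorphic identification, and you have a harmless typo ($Az$ for $Ax$) in the intermediate step of the transition computation.
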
 

\begin{proof} Locally we can find a coordinate chart $U \subset B$ with coordinates $y_1, \ldots, y_n$ such that $\Lambda$ is a coordinate lattice as in (\ref{eqn:coord}).  Then the coordinate functions on $TU$ are given by $y_1, \ldots, y_n$ and $x_1 = dy_1, \ldots, x_n = dy_n$ and we can define holomorphic coordinates on $TU$ by $z_j = x_j + \sqrt{-1} y_j$. Since the transition functions on $B$  preserve the lattice, they induce transition functions on $TB$ that are holomorphic with respect to these coordinates giving $TB$ the structure of a complex manifold. 

Consider the holomorphic functions defined locally by
$$
q_j := e^{2\pi \sqrt{-1}z_j}.
$$
These functions are invariant under integral affine transition functions as well as global translations by $\Lambda$ and so they give a compatible system of holomorphic coordinates for $TB/\Lambda$.

Similarly, in local coordinates $U$ where $\Lambda$ is the coordinate lattice, then $\Lambda^* \subset T^*U$ is generated by $dy_1, \ldots, dy_n$ as a lattice in $T^*U$. Therefore the standard symplectic structure on $T^*B$ is invariant by $\Lambda^*$ and descends to $T^*B/\Lambda^*$. 

\end{proof}

Now suppose $B$ is a smooth manifold equipped with two integral affine structures $\Lambda_0, \Lambda_1 \subset TB$ as well as an isomorphism $TB \cong T^*B$ such that $\Lambda_0 \cong (\Lambda_1)^*$ and $\Lambda_1 \cong (\Lambda_0)^*$. Then we have dual torus fibrations 
$$
\xymatrix{ X \ar[dr]_\mu & & \check{X}  \ar[ld]^{\check{\mu}} \\ & B & }
$$
where $X := TB/\Lambda_0 \cong T^*B/(\Lambda_1)^*$ and $\check{X}  := T^*B/(\Lambda_0)^* \cong TB/(\Lambda_1)$. This construction satisfies the following properties:

\begin{enumerate}[(a)]
\item if $\Lambda_0$ and $\Lambda_1$ are the integral affine structures associated to SYZ dual torus fibrations as in Section \ref{sec:intaffine}, then this construction recovers the original fibrations; 

\item $\Lambda_0$ determines the complex structure of $X$ and the symplectic structure of $\check{X}$;

\item $\Lambda_1$ determines the symplectic structure of $X$ and the complex structure of $\check{X}$. 

\end{enumerate}
As a result we recover one of the main predictions of mirror symmetry: \emph{deformations of the complex structure on $X$ are the same as deformations of the symplectic structure on $\check{X}$ and vice versa.}

\begin{remark} There is an extra piece of structure on $B$ that we haven't discussed. This is a Hessian metric $g$ realizing the identification $TB \cong T^*B$. Recall that a Hessian metric is a Riemannian metric that is locally the Hessian of some smooth potential function $K$. The two integral affine structures on $B$ endow it with two different sets of local coordinates and the potential functions in these coordinates are related by the Legendre transform. In fact the complex and symplectic structures constructed in Proposition \ref{prop:semiflat} can be recovered from the potential function so mirror symmetry in this context is governed by the Legendre transform \cite{hitchin} \cite[Section 6.1.2]{dbranes}.  \end{remark} 

\subsection{The SYZ transform} To finish our discussion of semi-flat mirror symmetry, we turn our attention to the homological mirror symmetry conjecture. The goal is to construct a geometric functor 
$$
\Phi : \mc{F}uk(X) \to D^b(\mathrm{Coh}(\check{X}))
$$
from the Fukaya category of $X$ to the derived category of coherent sheaves on $\check{X}$ using the geometry of the dual fibrations. The first step is to produce an object of $D^b(\mathrm{Coh}(\check{X}))$ from a Lagrangian $L \subset X$ equipped with a flat unitary connection. We will attempt to do this by exploiting the fact that a point $p \in X$ corresponds to a flat $U(1)$-connection on the dual fiber.

Let $L \subset X$ be a Lagrangian section of $\mu$ corresponding to a map $\sigma : B \to X$, equipped with the trivial connection. By restricting $L$ to each fiber of $\mu$, we obtain a family of flat $U(1)$-connections 
$$
\{\nabla_{\sigma(b)}\}_{b \in B}
$$
on the fibers of $\check{\mu} : \check{X} \to B$. These glue together to give a flat $U(1)$-connection on a complex line bundle $\mathcal{L}$ on $\check{X}$. It turns out this connection gives $\mathcal{L}$ the structure of a holomorphic line bundle on $\check{X}$ (endowed with the complex structure constructed in the last subsection).

This construction was generalized by \cite{ap} (see also \cite{slaghym}) as follows. As $X$ is the moduli space of flat $U(1)$-connections on the fibers of $\check{\mu} : \check{X} \to B$, there exists a universal bundle with connection $(\mathcal{P}, \nabla^{\mathcal{P}})$ on $X \times_B \check{X}$. Given $(L,\mathcal{E},\nabla)$ where $L \subset X$ is a multisection transverse to the fibers of $\mu$ and $(\mathcal{E},\nabla)$ is a flat unitary vector bundle on $L$, define the \emph{SYZ transform} by
$$
\Phi^{SYZ}(L, \mathcal{E}, \nabla) := (pr_{\check{X}})_*((pr_L)^*\mathcal{E} \otimes (i \times id)^*\mathcal{P})
$$
where $pr_L, pr_{\check{X}} : L \times_B \check{X} \to L, \check{X}$ are the projections and $(i \times id) : L \times_B \check{X} \to X \times_B \check{X}$ is the inclusion. Note that $\Phi^{SYZ}(L, \mathcal{E},\nabla)$ comes equipped with a connection we denote $\nabla_{(L,\mathcal{E},\nabla)}$. 

\begin{theorem}\label{thm:transform}(\cite[Theorem 1.1]{ap}) If $L \subset X$ is Lagrangian, then $\nabla_{(L,\mathcal{E},\nabla)}$ endows \linebreak $\Phi^{SYZ}(L,\mathcal{E},\nabla)$ with the structure of a holomorphic vector bundle on $\check{X}$. When $X$ and $\check{X}$ are dual elliptic curves fibered over $S^1$, then every holomorphic vector bundle on $\check{X}$ is obtained this way.  
\end{theorem}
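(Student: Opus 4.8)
The plan is to reduce the first assertion to the identity \emph{the $(0,2)$-part of the curvature of $\nabla_{(L,\mc{E},\nabla)}$ vanishes if and only if $L$ is Lagrangian}, after which holomorphicity follows from the Koszul--Malgrange integrability theorem: a smooth complex vector bundle with a connection whose curvature is of type $(1,1)$ carries a unique holomorphic structure with $\bar\partial$-operator $\nabla^{0,1}$. To verify the curvature identity I would work over an integral affine chart $U\subseteq B$ with coordinates $y=(y_1,\dots,y_n)$ adapted to the relevant integral affine structure, trivializing $X|_U\cong T^*U/\Lambda^*$ with fiber coordinates $\xi_j$ and $\check X|_U\cong TU/\Lambda$ with fiber coordinates $x_j$, so that by Proposition \ref{prop:semiflat} the complex structure on $\check X$ has holomorphic coordinates $\check z_j=x_j+\sqrt{-1}\,y_j$. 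In these coordinates the universal bundle $(\mc{P},\nabla^{\mc{P}})$ has curvature $2\pi\sqrt{-1}\sum_j d\xi_j\wedge dx_j$, and a Lagrangian multisection that happens to be a section is the graph $\{\xi=\sigma(y)\}$ of a $1$-form $\sigma=\sum_j\sigma_j(y)\,dy_j$ on $U$, which is Lagrangian for the standard symplectic form on $T^*U$ exactly when $d\sigma=0$.

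Restricting $(pr_L)^*\mc{E}\otimes(i\times\mathrm{id})^*\mc{P}$ to $L\times_B\check X$ substitutes $\xi_j=\sigma_j(y)$; since $L\times_B\check X\to\check X$ is a diffeomorphism for a section, one reads off that $\Phi^{SYZ}(L,\mc{E},\nabla)$ is the bundle $\mc{E}$ carrying $\nabla$ twisted by the scalar connection with curvature $2\pi\sqrt{-1}\sum_{j,k}\partial_k\sigma_j\,dy_k\wedge dx_j$. As $\nabla$ is flat it contributes nothing, and there is no $dy\wedge dy$ term because $d(dx_j)=0$; expressing $dy_k\wedge dx_j$ through $d\check z_j, d\bar{\check z}_j$ and extracting the $(0,2)$-part yields, up to a nonzero constant, $\sum_{j,k}(\partial_k\sigma_j-\partial_j\sigma_k)\,d\bar{\check z}_k\wedge d\bar{\check z}_j$, which vanishes precisely when $d\sigma=0$. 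For a general multisection the map $L\times_B\check X\to\check X$ is instead a finite covering, so the pushforward is still a vector bundle, of rank $\deg(L/B)\cdot\operatorname{rk}\mc{E}$, with a connection; the same computation runs sheet by sheet, and the local holomorphic structures glue because $(\mc{P},\nabla^{\mc{P}})$ and the construction are global, giving $\bar\partial^2=0$ on $\check X$.

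For the elliptic curve statement take $B=S^1$ with $X,\check X$ the two dual $T^1$-fibrations over it. Here $X$ is a real surface, so \emph{every} embedded circle in it is automatically Lagrangian and the hypothesis on $L$ is vacuous; a connected transverse multisection is, up to Hamiltonian isotopy, a straight line of primitive slope $q/p$ with $\gcd(p,q)=1$, $p\ge 1$, covering $B$ with degree $p$. Equipping it with a rank-$m$ flat unitary bundle and applying $\Phi^{SYZ}$ produces, by the local computation together with a Chern class count, a semistable holomorphic bundle on $\check X$ of rank $mp$ and slope determined by $q/p$; translating $L$ and varying the holonomy of $\nabla$ trace out a $T^2\cong\check X$ which I expect to match, essentially bijectively, the moduli space of such bundles. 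In particular $m=1$ with $(p,q)$ coprime realizes every \emph{stable} bundle on $\check X$ with every determinant, and disjoint unions of Lagrangians realize arbitrary direct sums. To capture the remaining indecomposables --- those of rank $r$, degree $d$ with $h=\gcd(r,d)>1$, notably the Atiyah bundles $F_r$ of degree $0$ --- I would invoke Atiyah's classification of bundles on an elliptic curve: every bundle is a direct sum of semistable indecomposables, each of which is a pushforward/twist of a stable bundle along an isogeny; equivalently, one identifies $\Phi^{SYZ}$ here with a twist of the classical Fourier--Mukai transform, which is an equivalence of derived categories.

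I expect the obstacle to be twofold. First, in the general statement, keeping the bookkeeping consistent between the symplectic structure on $X$ and the complex structure on $\check X$ in the presence of the Hessian metric identifying the two integral affine structures, and rigorously pushing connections forward along the finite non-proper map $L\times_B\check X\to\check X$ and gluing the resulting $\bar\partial$-operators. Second, and more serious, the essential surjectivity onto \emph{all} bundles in the elliptic case: the degree-$0$ indecomposables $F_r$ with $r\ge 2$ are not images of honest connected geometric Lagrangians with flat bundles, since a unitary flat bundle on $S^1$ is a direct sum of line bundles, so one must either admit the non-transverse torus fibers and pass to mapping cones --- i.e. to the induced functor on twisted complexes --- or extract these bundles from Atiyah's classification as above. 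Making that last step precise is where the argument leans most heavily on input external to the SYZ picture itself.
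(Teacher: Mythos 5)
The paper itself gives no proof of this statement---it is quoted verbatim from \cite[Theorem 1.1]{ap}---so there is no in-text argument to compare against; I can only assess your sketch against the argument in the literature. For the first assertion your route is the standard one and is essentially correct: trivialize over an integral affine chart, observe that the Poincar\'e bundle has curvature $2\pi\sqrt{-1}\sum_j d\xi_j\wedge dx_j$, restrict to the graph of $\sigma=\sum_j\sigma_j\,dy_j$, and check that the $(0,2)$-part of the resulting curvature is a nonzero constant times $\sum_{j<k}(\partial_k\sigma_j-\partial_j\sigma_k)\,d\bar{\check z}_k\wedge d\bar{\check z}_j$, which vanishes exactly when $d\sigma=0$, i.e.\ when $L$ is Lagrangian; Koszul--Malgrange then produces the holomorphic structure, and pushing forward along the finite covering $L\times_B\check X\to\check X$ for a transverse multisection is harmless. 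This is the same computation that drives the proofs of Arinkin--Polishchuk and of Leung--Yau--Zaslow, so the first half of your sketch is a genuine (completable) proof outline.

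The gap you flag in the second assertion is real, and it is the one place your argument does not close. A flat \emph{unitary} connection on a connected circle has diagonalizable holonomy, so it splits into flat line bundles; a connected transverse multisection is a primitive line of slope $q/p$ with $\gcd(p,q)=1$; hence applying $\Phi^{SYZ}$ to the data allowed by the statement as transcribed here produces only direct sums of stable bundles of coprime rank and degree, i.e.\ polystable bundles. By Atiyah's classification this misses every indecomposable semistable bundle with $\gcd(\operatorname{rk},\deg)=h>1$, in particular the iterated self-extensions $F_h$ of $\mc{O}_{\check X}$. The resolution used in \cite{ap} (and in Polishchuk--Zaslow) is exactly one of the two you propose: enlarge the class of local systems so that the monodromy is unitary only up to a unipotent factor---then $F_h$ is the transform of the zero-section carrying the rank-$h$ unipotent local system---or, equivalently, identify $\Phi^{SYZ}$ with a Fourier--Mukai functor and import Atiyah's classification. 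So your sketch is correct in architecture, but the surjectivity claim cannot be proved from unitary data on honest connected Lagrangians alone; completing it requires either relaxing ``unitary'' in the hypotheses or passing to extensions/twisted complexes, and you are right that this is where external input enters.
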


Viewing holomorphic vector bundles as objects in $D^b(\mathrm{Coh}(\check{X}))$, we can hope to extend the SYZ transform to an equivalence $\Phi : \mathcal{F}uk(X) \to D^b(\mathrm{Coh}(\check{X}))$, thus realizing the HMS conjecture. While this hope hasn't been realized in general, it has in some special cases. 

When $X$ and $\check{X}$ are dual elliptic curves fibered over $S^1$, a HMS equivalence $\Phi$ is constructed by hand in \cite{elliptic}. One can check that their functor $\Phi$ does indeed extend the SYZ transform $\Phi^{SYZ}$. In fact, assuming Theorem \ref{thm:transform}, it is not so hard to construct $\Phi$ at least on the level of objects. Each coherent sheaf on the curve $X$ can be decomposed as a direct sum of a torsion sheaf and a vector bundle. Vector bundles are taken care of by Theorem \ref{thm:transform}. Torsion sheaves are successive extensions of skyscrapers at points which correspond to $S^1$ fibers of $\mu : X \to B$. For more recent work on understanding the SYZ transform see \cite{syztransform} and the references therein.

\section{Constructing mirrors}\label{sec:construction}

We now move on to the general problem of constructing mirrors. Given a K\"ahler Calabi Yau $n$-fold $(X, J, \omega, \Omega)$, the SYZ conjecture suggests the following strategy for constructing a mirror.
\subsubsection{Strategy} \label{sec:strategy}
\begin{enumerate}[(i)]
\item produce a special Lagrangian fibration $\mu : X \to B$;\footnote{This choice is the reason that $X$ may have several mirrors.}
\item dualize the smooth locus $\mu_0 : X_0 \to B_0$ to obtain a semi-flat mirror $\check{\mu}_0: \check{X}_0 \to B_0$;
\item compactify $\check{X}_0$ to obtain a CY $n$-fold with a dual SYZ fibration $\check{\mu} : \check{X} \to B$;
\item use the geometry of the dual fibrations to construct a HMS equivalence $$\Phi: \mc{F}uk(X) \to D^b(\mathrm{Coh}(\check{X})).$$
\end{enumerate}

\subsubsection{Obstacles} There are many obstacles to carrying out \ref{sec:strategy} and (ii) is the only step where a satisfactory answer is known as we discussed in Section \ref{semiflat}. 

Producing sLag fibrations on a compact Calabi Yau $n$-folds is a hard open problem in general. Furthermore, work of Joyce \cite{joyce1} suggests that even when sLag fibrations exist, they might be ill-behaved. The map $\mu$ is not necessarily differentiable and may have real codimension one discriminant locus in the base $B$. In this case $B_0$ is disconnected and one needs to perform steps (ii) and (iii) on each component and then glue. 

Compactifying $\check{X}_0$ to a complex manifold also poses problems. There are obstructions to extending the semi-flat complex structure on $\check{X}_0$ to any compactification. To remedy this, one needs to take a small deformation of $\check{X}_0$ by modifying the complex structure using \emph{instanton corrections}. 

Step (iv) has been realized in some special cases (e.g. \cite{kontsevichsoibelman} \cite{ab1} \cite{ab2} \cite{ab3} \cite{blowup} and references therein) but a general theory for producing an equivalence $\Phi$ given an SYZ mirror is still elusive. 

\subsection{Instanton corrections}\label{sec:instanton} The small deformation of the complex structure on the dual $\check{X}_0$ is necessitated by the existence of obstructed Lagrangians. The point is that the Fukaya category of $X$ doesn't contain all pairs $(L,\nabla)$ of Lagrangians with flat connection but only those pairs where $L$ is \emph{unobstructed}. 

The differential for the Floer complex is constructed using a count of pseudoholomorphic discs bounded by $L$. In particular, $d^2$ is not necessarily zero. A Lagrangian $L$ is unobstructed if the necessary counts of pseudoholomorphic discs bounded by $L$ cancel out so that the Floer differential satisfies $d^2 = 0$. In particular, if $L$ doesn't bound any nonconstant holomorphic discs, then it is unobstructed. A problem arises if $\mu : X \to B$ has singular fibers because then the smooth torus fibers may bound nontrivial holomorphic discs known as \emph{disc instantons}. For example, any vanishing $1$-cycle on a nearby fiber sweeps out such a disc. 

To construct the dual $\check{X}$ as a complex moduli space of objects in the Fukaya we need to account for the effect of these instantons on the objects in the Fukaya category. This is done by modifying the semi-flat complex structure using counts of such disc instantons. 

In fact, one can explicitly write down the coordinates for the semi-flat complex structure described in Section \ref{semiflat} in terms of the symplectic area of cylinders swept out by isotopy of nearby smooth Lagrangian fibers as in Section \ref{LGmodel}. Then the discs bounded by obstructed Lagrangians lead to nontrivial monodromy of the semi-flat complex on $\check{X}_0$ which is an obstruction to the complex structure extending to a compactification $\check{X}$.  The instanton corrections are given by multiplying these coordinates by the generating series for virtual counts of holomorphic discs bounded by the fibers. 

For more details on instanton corrections, see for example \cite{auroux} \cite{blowup} \cite{tu}. 

\subsection{From torus fibrations to degenerations}  

Heuristics from physics suggest that $X$ will admit an SYZ fibration in the limit toward a maximally unipotent degeneration.\footnote{That is, a degeneration with maximally unipotent monodromy. These are sometimes known as large complex structure limits (LCSL).} It was independently conjectured in \cite{grosswilson} and \cite{kontsevichsoibelman} that if $\mc{X} \to \mb{D}$ is such a degeneration over a disc (where $X = \mc{X}_\epsilon$ for for some small $\epsilon \ll 1$) and $g_t$ is a suitably normalized metric on $\mc{X}_t$, then the Gromov-Hausdorff limit of the metric spaces $(\mc{X}_t, g_t)$ collapses the Lagrangian torus fibers onto the base $B$ of an SYZ fibration. Furthermore, this base should be recovered as the dual complex of the special fiber of $\mc{X} \to \mb{D}$ endowed with the appropriate singular integral affine structure. Then one can hope to reconstruct the instanton corrected SYZ dual directly from data on $B$. 

This allows one to bypass the issue of constructing a sLag fibration by instead constructing a maximally unipotent degeneration. Toric degenerations are particularly well suited for this purpose. This is the point of view taken in the Gross-Siebert program \cite{gs1} \cite{gs2} and gives rise to a version of SYZ mirror symmetry purely within algebraic geometry. In this setting the instanton corrections should come from logarithmic Gromov-Witten invariants of the degeneration as constructed in \cite{chen} \cite{ac} \cite{gs3} and these invariants can be computed tropically from data on the base $B$. For more on this see for example \cite{grosssurv} \cite{grosstrop} \cite{gs4}

\subsection{Beyond the Calabi-Yau case} \label{LGmodel}

The SYZ approach can also be used to understand mirror symmetry beyond the case of Calabi-Yau manifolds. The most natural generalization involves log Calabi-Yau pairs $(X,D)$ where $D \subset X$ is a boundary divisor and the sheaf $\omega_X(D)$ of top forms with logarithmic poles along $D$ is trivial. That is, $D$ is a section of the anticanonical sheaf $\omega_X^{-1}$ and $X \setminus D$ is an open Calabi-Yau. 

In this case the mirror should consist of a pair $(M,W)$ consisting of a complex manifold $M$ with a holomorphic function $W : M \to \CC$. The pair $(M,W)$ is known as a \emph{Landau-Ginzburg model} and the function $W$ is the \emph{superpotential} \cite{kapustinli}. Homological mirror symmetry takes the form of an equivalence 
$$
\Phi : \mc{F}uk(X,D) \to MF(M,W)
$$
between a version of the Fukaya category for pairs $(X,D)$ and the category of \emph{matrix factorizations} of $(M,W)$. Recall that a matrix factorization is a $2$-periodic complex
$$ 
\left(\xymatrix{\ldots \ar[r] & P_0 \ar[r]^d & P_1 \ar[r]^d & P_0 \ar[r] & \ldots}\right)
$$
of coherent sheaves on $M$ satisfying $d^2 = W$. By a theorem of Orlov \cite{orlov}, the category $MF(M,W)$ is equivalent to the derived category of singularities $D^b_{sing}(\{W = 0\})$.\footnote{Here we've assumed for simplicity that the only critical value of $W$ is at $0 \in \CC$.}

The SYZ conjecture gives a recipe for constructing the Landau-Ginzburg dual $(M,W)$. Here we give the version as stated in \cite{auroux1}:

\begin{conjecture} Let $(X, J, \omega)$ be a compact K\"ahler manifold and $D$ a section of $K_X^{-1}$. Suppose $\mu : U = X\setminus D \to B$ is an SYZ fibration where $U$ is equipped with a holomorphic volume form $\Omega$. Then the mirror to $(X,D)$ is the Landau-Ginzburg model $(\check{U}, W)$ where 
$$
\check{\mu} : \check{U} \to B
$$
is the SYZ dual fibration equipped with the instanton corrected complex structure and the superpotential $W$ is computed by counting holomorphic discs in $(X,D)$. 
\end{conjecture}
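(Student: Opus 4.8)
Since the statement is a conjecture rather than a theorem, what follows is a strategy for attacking it rather than a proof; the plan is to organize the argument around a precise construction of the superpotential $W$ and then to carry out steps (iii) and (iv) of \ref{sec:strategy}. \emph{Defining $W$.} Given a smooth Lagrangian torus fibre $L = \mu^{-1}(b)$ of the SYZ fibration on $U = X \setminus D$ together with a flat $U(1)$-connection $\nabla$ --- that is, a point of the semi-flat dual $\check{\mu}_0 : \check{U}_0 \to B_0$ --- I would set
$$
W(L,\nabla) = \sum_{\beta} n_\beta(L)\, \exp\!\left(-\int_\beta \omega\right)\mathrm{hol}_\nabla(\partial\beta),
$$
where $\beta$ runs over the classes in $\pi_2(X,L)$ of Maslov index $2$ and $n_\beta(L)$ is the virtual count of holomorphic discs in $X$ in the class $\beta$ passing through a generic point of $L$. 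The first task is to make this well posed: one must invoke the Fukaya-Oh-Ohta-Ono virtual perturbation machinery to define $n_\beta(L)$, and either pass to the Novikov ring or work in the large complex structure limit in order to control convergence of the series. One should then identify $W$ with the curved term $\mathfrak{m}_0$ of the $A_\infty$-structure associated to $L$, so that the unobstructed objects of $\mc{F}uk(X,D)$ supported on a fixed fibre are precisely the critical points of $W$ restricted to that fibre --- the classical limit of the expected equivalence $\mc{F}uk(X,D) \simeq MF(\check{U},W)$.

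\emph{Instanton corrections and compactification.} As $b$ varies over $B_0$ the counts $n_\beta(L)$ jump across walls --- the loci where $L$ bounds a Maslov-$0$ disc --- so the formula above defines $W$ only locally with respect to the semi-flat complex structure on $\check{U}_0$. The crux is to build the instanton-corrected complex structure so that these local expressions patch: the transition maps across walls must be the wall-crossing automorphisms prescribed by the Kontsevich-Soibelman and Gross-Siebert scattering formalism, and one must prove that there is a consistent choice of such gluings --- a consistent scattering diagram --- with respect to which $W$ descends to a single global holomorphic function. One then compactifies $\check{U}_0$ to $\check{U}$ by adjoining fibres over $B \setminus B_0$ carrying the corrected complex structure, and checks that $W$ extends over the compactification with the predicted critical locus.

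\emph{Homological mirror symmetry and the main obstacle.} Finally one would construct the functor $\Phi : \mc{F}uk(X,D) \to MF(\check{U},W)$ by extending the SYZ transform of Theorem \ref{thm:transform} fibrewise via family Floer homology, and prove that it is an equivalence; short of a full proof, one can at least verify the closed-string consequences, namely that the Jacobian ring of $W$ recovers the quantum cohomology of $X$ and that the Hodge-theoretic predictions of mirror symmetry hold. The main obstacle is the middle step: even granting a (possibly non-differentiable) special Lagrangian fibration, showing that the disc-count generating functions assemble into a globally consistent instanton-corrected complex structure --- equivalently, that the relevant scattering diagram is consistent and $W$ is single-valued --- is exactly the analytic and combinatorial heart of the problem, and it is precisely what the Gross-Siebert program was developed to handle in the algebro-geometric setting; no proof at this level of generality is known. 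Constructing and establishing the equivalence $\Phi$ is a further, essentially independent difficulty for which there is as yet no general method.
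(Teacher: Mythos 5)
The statement is a conjecture and the paper offers no proof, only the surrounding construction of the superpotential; your proposal correctly identifies this and follows essentially the same route the paper sketches --- the coordinates $z^\beta = \exp(-\int_\beta\omega)\,\mathrm{hol}_\nabla(\partial\beta)$, the identification of $W$ with the Fukaya--Oh--Ohta--Ono obstruction term $m_0$ summed over Maslov index $2$ classes, and the need for instanton corrections because the counts $n_\beta(L)$ jump across obstructed fibers. Your additional remarks on scattering diagrams and family Floer theory are consistent with the paper's pointers to the Gross--Siebert program and accurately locate where the open difficulties lie.
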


We briefly recall the construction of the superpotential. Let $\mu_0 : U_0 \to B_0$ be the smooth locus of the fibration so that $\check{U}_0$ is the semi-flat dual. Consider a family of relative homology classes $A_L \in H_2(X,L;\ZZ)$ as the Lagrangian torus fiber $L$ varies. Then the function
$$
z^A : \check{U}_0 \to \CC \enspace \enspace \enspace z^A(L,\nabla) = \exp\left(-\int_{A_L} \omega\right)\mathrm{hol}_\nabla(\partial A_L).
$$
is a holomorphic local coordinate on $\check{U}_0$. 

Let
$$
m_0(L,\nabla) = \sum_{\beta \in H_2(X,L;\ZZ)} n_\beta(L) z^\beta
$$
where $n_\beta(L)$ is Gromov-Witten count of holomorphic discs in $X$ bounded by $L$ and intersecting $D$ transversally. \footnote{More precisely, the sum is over curve classes $\beta$ with Maslov index $\mu(\beta) = 2$.} This is a holomorphic function on $\check{U}_0$ when it is defined but in general it only becomes well defined after instanton correcting the complex structure. The idea is that the number $n_\beta(L)$ jumps across an obstructed Lagrangian $L$ that bounds disc instantons in $X \setminus D$. Instanton corrections account for this and so $m_0$ should extend to a holomorphic function $W$ on the instanton corrected dual $\check{U}$. 
 
In fact $m_0$ is the obstruction to Floer homology constructed in \cite{fooo}. That is, $d^2 = m_0$ where $d$ is the Floer differential on the Floer complex $CF^*(L,L)$. This explains why the Landau-Ginzburg superpotential $W$ should be given by $m_0$. If one believes homological mirror symmetry, then obstructed chain complexes in the Fukaya category should lead to matrix factorizations with $W = m_0$ on the mirror. 

\begin{example} Let $X = \mb{P}^1$ with anticanonical divisor $\{0, \infty\} = D$. Then $U = \mb{C}^*$ admits a sLag fibration $\mu : U \to B$ where $B$ is the open interval $(0, \infty)$ and $\mu^{-1}(r) = \{|z| = r\}$ is a circle. The dual is $\check{U} = \mb{C}^*$ is also an algebraic torus and there are no instanton corrections since all the fibers of $\mu$ are smooth. Each sLag circle $L \subset U \subset X$ cuts $X$ into two discs $D_0$ and $D_\infty$ whose classes satisfy $[D_0] + [D_\infty] = [\mb{P}^1]$ in $H_2(X,L;\ZZ)$ so that the corresponding coordinate functions $z_0$ and $z_\infty$  on $\check{U}$ satisfy $z_0z_\infty = 1$. Furthermore, 
$$
\exp\left(-\int_{D_0} \omega\right) \exp\left(-\int_{D_\infty} \omega \right) = e^{-A}
$$
where $A = \int_{\mb{P}^1} \omega$ is the symplectic area. Furthermore, it is easy to see that $n_{[D_0]}(L) = n_{[D_1]}(L) = 1$. Putting it together and rescaling, we obtain the superpotential 
$$
W = z_0 + \frac{e^{-A}}{z_0} : \mb{C}^* \to \mb{C}. 
$$

A similar argument works for any Fano toric pair $(X,D)$ where $\mu$ is the moment map, $B$ is the interior of the moment polytope $P$, $\check{U} = (\mb{C}^*)^n$ is an algebraic torus, and $W$ is given as a sum over facets of $P$ \cite{auroux1} \cite{chooh}.
\end{example} 
\bibliographystyle{alpha} 
\bibliography{syz} 

\end{document}